\newtheorem{theorem}{Theorem}[section]
\newtheorem{prop}[theorem]{Proposition}
\newtheorem{lem}[theorem]{Lemma}
\newtheorem{rem}[theorem]{Remark}
\numberwithin{equation}{section}
\newtheorem{conj}[theorem]{Conjecture}
\renewcommand{\L}{\mathcal{L}}
\newcommand{\e}{\varepsilon}
\renewcommand{\a}{\alpha}
\renewcommand{\b}{\beta}
\newcommand{\N}{\mathbb{N}}
\newcommand{\Z}{\mathbb{Z}}
\renewcommand{\leq}{\leqslant}
\renewcommand{\geq}{\geqslant}
\renewcommand{\d}{\delta}
\renewcommand{\L}{\Lambda}
\renewcommand{\pmod}[1]{\allowbreak\mkern7mu({\operator@font mod}\,\,#1)}
\newcommand{\C}{\mathbb C}
\newcommand{\T}{\mathbb T}
\newcommand{\R}{\mathbb R}
\renewcommand{\L}{\Lambda}
\renewcommand{\a}{\alpha}
\renewcommand{\b}{\beta}
\newcommand{\g}{\gamma}
\renewcommand{\leq}{\leqslant}
\renewcommand{\geq}{\geqslant}
\begin{document}

\title[Trigonometric polynomials with frequencies in the set of squares]{%Conjecture of Cilleruelo and Cordoba \\
Trigonometric polynomials with frequencies \\ in the set of squares and divisors in a short interval}
\author{Mikhail R. Gabdullin}
\date{}
%\address{Lomonosov Moscow State University, Leninskie Gory str., 1, Moscow, Russia, 119991}
%\address{Steklov Mathematical Institute,
%	Gubkina str., 8, Moscow, Russia, 119991}
%\email{gabdullin.mikhail@yandex.ru}

\address{Department of mathematics, 1409 West Green Street, University of Illinois at Urbana-Champaign, Urbana, IL 61801, USA; Steklov Mathematical Institute,
	Gubkina str., 8, Moscow, 119991, Russia}
\email{gabdullin.mikhail@yandex.ru, mikhailg@illinois.edu}

\thanks{2010 Mathematics Subject Classification: Primary 42A05, 11A05}

\thanks{Keywords and phrases: squares, trigonometric polynomials, divisors}

\thanks{ORCID: 0000-0001-8774-8334}

\begin{abstract}
Let $\g_0=\frac{\sqrt5-1}{2}=0.618\ldots$ . We prove that, for any $\e>0$ and any trigonometric polynomial $f$ with frequencies in the set $\{n^2: N \leq n\leq N+N^{\g_0-\e}\}$, the inequality
$$
\|f\|_4 \ll \e^{-1/4}\|f\|_2
$$
holds, which makes a progress on a conjecture of Cilleruelo and C\'ordoba. We also present a connection between this conjecture and the conjecture of Ruzsa which asserts that, for any $\e>0$, there is $C(\e)>0$ such that each positive integer $N$ has at most $C(\e)$ divisors in the interval $[N^{1/2}, N^{1/2}+N^{1/2-\e}]$.
\end{abstract}

\date{\today}
\maketitle

\section{Introduction} 

Let us say that a trigonometric polynomial $f$ has frequencies in a set $A\subseteq\Z$ if $f(x)=\sum_{n\in A}a_ne(nx)$ with $a_n\in\C$ (here and in what follows $e(x)=e^{2\pi ix}$). Let $p>2$. Recall that a set $A\subseteq \Z$ is said to be a $\L_p$-set if there exists a constant $C(A,p)>0$ such that for any trigonometric polynomial $f$ with frequencies in the set $A$, the inequality
\begin{equation}\label{1.1}
\|f\|_p \leq C(A,p)\|f\|_2
\end{equation}
holds, where $\|f\|_p=\left(\int_{\T}|f(x)|^pdx\right)^{1/p}$ and $\T=\R/\Z$.  It is easy to show that if a set $A$ is a $\L_p$-set for some $p>2$, then the counting function of $A$ cannot grow too fast. To be more precise, let $A_N=A\cap[-N,N]$ and $f(x)=\sum_{n\in A_N}e(nx)$; then by the properties of the Dirichlet kernel $D_N(x)=\sum_{|k|\leq N}e(kx)$, H\"older's inequality and (\ref{1.1}),
$$
|A_N|=f(0)=(f*D_N)(0)=\int_{\T}f(-y)D_N(y)dy \leq \|f\|_p\|D_N\|_q \ll |A_N|^{1/2}N^{1/p}
$$
(here $q$ is defined by $1/p+1/q=1$), and, hence, $|A_N| \ll N^{2/p}$. Using probabilistic methods, Bourgain \cite{Bour} showed that this upper bound is tight: for instance, Theorem 3 in \cite{Bour} states that for any $p>2$, there exists a $\L_p$-subset $A$ of the set of prime numbers of maximal density, that is, with $\varliminf_{N\to\infty}|A_N|N^{-2/p}\gg1$. However, that proof does not produce any examples of $\L_p$-sets. On the other hand, we note that there is the classical result that any lacunary sequence is a $\L_p$-set for every $p>2$ (e.g., \cite{Gr}, Theorem 3.6.4). 

In this regard, it is of a great interest to find polynomial sequences which are $\L_p$-sets. There is a famous and still unsolved conjecture about the set of squares $\{n^2: n\in \N\}$, which was discussed by W.~Rudin (see the end of section 4.6 in \cite{Rud}).

\begin{conj}\label{conj1.1}
The set of squares is a $\L_p$-set for any $2<p<4$. 
\end{conj}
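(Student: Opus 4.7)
Conjecture~\ref{conj1.1} is a famous open problem of Rudin, so my outline is a research plan rather than a proof. Since $\|f\|_{p'} \leq \|f\|_p$ for $p' \leq p$ on $\T$, it suffices to establish the inequality for $p$ arbitrarily close to (but less than) $4$, and the natural approach is to start from the $L^4$ moment.

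\textbf{Fourth moment expansion.} For $f(x)=\sum_{n\in A}a_n e(n^2 x)$ with $A\subseteq\N$ finite, expand
$$
\|f\|_4^4 = \sum_k\Bigl|\sum_{\substack{n_1,n_2\in A\\n_1^2+n_2^2=k}} a_{n_1}a_{n_2}\Bigr|^2 \leq \bigl(\max_k r_A(k)\bigr)\|f\|_2^4,
$$
where $r_A(k)=\#\{(n_1,n_2)\in A^2:n_1^2+n_2^2=k\}$. A uniform bound on $\max_k r_A(k)$ fails in general: integers $k$ with many prime factors $\equiv 1\pmod 4$ yield $r_A(k)$ as large as $k^{c/\log\log k}$. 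Thus the set of squares is \emph{not} a $\Lambda_4$-set, which is consistent with the conjecture restricting to $p<4$.

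\textbf{Towards $p<4$.} To gain the regime $p<4$, I would partition the $k$-axis by the size of $r_A$: let $T_M=\{k:2^M\leq r_A(k)<2^{M+1}\}$, and let $f_M$ be $f$ restricted to those $n\in A$ having some partner $m\in A$ with $n^2+m^2\in T_M$. Standard divisor-moment estimates bound $|T_M|$, giving on each piece a usable $L^4$-bound $\|f_M\|_4 \ll 2^{M/4}\|f_M\|_2$. Summing the $\|f_M\|_p$ over $M$ by triangle inequality and exploiting that $\|\cdot\|_p$ for $p<4$ weights the large-$M$ layers less, I would try to piece these bounds into a uniform $\Lambda_p$ estimate.

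\textbf{Main obstacle.} The step I expect to be unavoidable is controlling the overlap of the $f_M$ on the frequency side: a single $n\in A$ may appear in many $f_M$ simultaneously, since it can be paired with many partners $m\in A$, so the triangle inequality over $M$ is lossy. Bounding this overlap amounts to estimating, for fixed $n$, the number of $m\in A$ such that $n^2+m^2$ has many representations as a sum of two squares; via the arithmetic of $\Z[i]$, this is equivalent to counting divisors of an integer clustered in a short interval, which is precisely the content of Ruzsa's conjecture quoted in the abstract. I therefore expect any variant of this strategy to succeed unconditionally only when $A\subseteq[N,N+L]$ with $L\leq N^{\gamma_0-\e}$, recovering the author's main theorem, and to resolve Conjecture~\ref{conj1.1} in full only conditionally on Ruzsa's conjecture.
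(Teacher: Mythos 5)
Conjecture~\ref{conj1.1} is Rudin's open problem; the paper gives no proof of it and explicitly remarks that the general case ``seems to be beyond the reach of current methods.'' You correctly recognize this and present a research outline rather than a proof, which is the appropriate response. So the ``gap'' here is simply that there is no proof — by anyone — and you are upfront about that.

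One structural mismatch with the paper's machinery is worth flagging, since it affects the plausibility of your plan. You expand $\|f\|_4^4$ via $f^2$, so the relevant frequencies are sums $n_1^2+n_2^2$ and the representation count $r_A(k)$ lives in the arithmetic of $\Z[i]$. The paper (Section~\ref{sec2}) instead expands via $|f|^2$, making the frequencies differences $m=n_1^2-n_2^2$, which factor over $\Z$ as $(n_1-n_2)(n_1+n_2)$; for $n_1,n_2\in[N,N+k]$ this gives immediately $r_A(m)\leq\tau(m;2N,2N+2k)$, a count of ordinary divisors in a short interval. That is exactly what makes the link to Ruzsa's Conjecture~\ref{conj1.7} sharp and drives Theorem~\ref{th1.4}. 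Your sum-of-squares route could in principle be pushed through Gaussian integers, but the reduction to a short-interval divisor problem is cleaner and more elementary on the difference side, so if you want your dyadic-layering plan to interface with the divisor estimates of Lemma~\ref{lem3.2} and Theorem~\ref{th3.1}, you should work with $|f|^2$. Beyond that, your diagnosis is accurate: the overlap of the layers $f_M$ is the real obstruction for the full (untruncated) set, the paper does not attempt to control it, and the unconditional progress it does make is precisely in the truncated regime $A\subseteq\{n^2: N\leq n\leq N+N^{\gamma}\}$ with $\gamma<\gamma_0$, as you anticipate.
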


A.~C\'ordoba \cite{Cor} proved the inequality (\ref{1.1}) with any fixed $2<p<4$ for polynomials $f(x)=\sum_{n=1}^Na_ne(n^2x)$ with positive and nonincreasing coefficients $a_n$. However, the general case of Conjecture \ref{conj1.1} seems to be beyond the reach of current methods. We mention only that, for $2<p<4$,
$$
\left\| \sum_{n\leq N}e(n^2x) \right\|_p \asymp N^{1/2};
$$
it follows from the distribution estimate
$$
\mu\left\{x\in\T: \left|\sum_{N\leq n\leq 2N} e(n^2x) \right| \geq \a>0 \right\} \ll \frac{N^2}{\a^4},
$$
obtained in \cite{JH}.
 
On the other hand, it is well-known (e.g., see the inequality (1.7) in \cite{Bour}) that 
\begin{equation}\label{1.2}
\left\|\sum_{n\leq N}e(n^2x)\right\|_4 \asymp N^{1/2}(\log N)^{1/4},
\end{equation}
so the set of squares is not a $\L_4$-set. Nevertheless, the case $p=4$ is of great interest, in particular because the $L_4$-norms are associated with the number of solutions of certain equations: for $f(x)=\sum_{n\in A}e(nx)$, where $A$ is a finite set, $\|f\|_4^4$ is nothing but the additive energy $E(A)$ of the set $A$, which is defined by 
$$
E(A)=\#\{(a_1,b_1,a_2,b_2)\in A^4: a_1+b_1=a_2+b_2 \}.
$$ 
Note that the trivial lower bound  $\|f\|_4\geq \|f\|_2=|A|^{1/2}$ for $L_4$-norm resembles the trivial lower bound $E(A)\geq 2|A|^2-|A|$ for the additive energy (which corresponds to the trivial solutions, that is, ones of the form $\{a_1,b_1\}=\{a_2,b_2\}$). Recall that the sets $A$ with $E(A)=2|A|^2-|A|$ (or, in other words, the sets for which the equation $a_1+b_1=a_2+b_2$ has only trivial solutions) are called Sidon sets. It is easy to show (and we will see it in Section \ref{sec2}) that any Sidon set is a $\L_4$-set.

J.~Bourgain \cite{Bour} made the following conjecture related to the case $p=4$.

\begin{conj}\label{conj1.2}
There exists $\d>0$ such that, for any $N\geq2$ and any trigonometric polynomial $f$ with frequencies in the set $\{n^2: n\leq N\}$,
$$
\|f\|_4 \ll \|f\|_2 \cdot (\log N)^{\delta}.
$$	
\end{conj}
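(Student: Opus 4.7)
The natural approach is to expand the $L^4$-norm in terms of pairs of squares with equal difference. Writing $f(x) = \sum_{n \leq N} a_n e(n^2 x)$ and applying Parseval to $|f|^2$ yields
$$\|f\|_4^4 = \sum_{k \in \Z} \left| \sum_{\substack{1 \leq n_1, n_2 \leq N \\ n_1^2 - n_2^2 = k}} a_{n_1} \bar a_{n_2} \right|^2.$$
Since $n_1^2 - n_2^2 = (n_1 - n_2)(n_1 + n_2)$, each inner sum is indexed by factorizations $k = de$ with $d,e$ of the same parity and $d \leq e \leq 2N$. For $k$ of size comparable to $N^2$ this is precisely a count of divisors of $k$ in a short window around $\sqrt{k}$, so the problem is pushed directly into the territory of the Ruzsa divisor conjecture advertised in the abstract.

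Applying Cauchy-Schwarz inside the $k$-sum gives
$$\left| \sum_{n_1^2 - n_2^2 = k} a_{n_1} \bar a_{n_2} \right|^2 \leq R(k) \sum_{n_1^2 - n_2^2 = k} |a_{n_1}|^2 |a_{n_2}|^2,$$
where $R(k)$ is the number of admissible factorizations; summing over $k$ collapses to $\|f\|_4^4 \leq \max_k R(k) \cdot \|f\|_2^4$. Hence a polylogarithmic bound on $R(k)$, uniform in $k$, would immediately imply Conjecture \ref{conj1.2}. I would additionally dyadically decompose the $k$-sum by the size of $k$ (equivalently, by the rough scale of $n_1 - n_2$), and in each dyadic range try to use the sharper divisor bounds available in that regime, replacing the worst-case $R(k)$ by something closer to an average quantity; a dual level-set decomposition of $|a_n|$ might prevent adversarial weight concentration on exceptional $k$.

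The principal obstacle is that the worst-case divisor count $\max_{k \leq N^2} R(k)$ is essentially the Ruzsa conjecture itself, which is unresolved: unconditionally, the divisor function can reach $k^{(\log 2 + o(1))/\log\log k}$, far larger than any $(\log N)^C$. Accordingly, the naive Cauchy-Schwarz step loses too much, and one would need either to exploit cancellation between the $a_{n_1} \bar a_{n_2}$ terms or to avoid the exceptional $k$ by structural arguments on the support of $f$. This is presumably why the paper retreats to frequencies in a short interval $[N, N + N^{\g_0 - \e}]$: the constraint $n_1, n_2 \in [N, N + N^{\g_0 - \e}]$ forces $d = n_1 - n_2$ into a short window and makes the relevant divisor counts tractable by current techniques. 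Deriving the full Bourgain conjecture unconditionally appears to require genuinely new arithmetic input on divisors in short intervals, which is why the conjecture remains open.
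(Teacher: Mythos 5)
The statement you were asked about is an open \emph{conjecture} (attributed to Bourgain), and the paper supplies no proof of it; it is stated, some context is given, and the paper then works on the weaker truncated Conjectures~\ref{conj1.3} and~\ref{conj1.5}. Your write-up correctly recognizes this and, rather than fabricating a proof, traces the natural reduction and explains why it stalls. The Cauchy--Schwarz step you perform, collapsing $\|f\|_4^4$ to (roughly) $\max_{k\neq 0} R(k)\cdot\|f\|_2^4$, is exactly the paper's inequality (\ref{2.1}); the paper applies it in Section~\ref{sec2} with the support restricted to $\{n^2: N\le n\le N+k\}$, which forces $n_1+n_2$ into the short window $[2N,2N+2k]$ and yields (\ref{2.2}). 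Your diagnosis of the obstacle, that $\max_k R(k)$ can be as large as $\tau(k)=k^{O(1/\log\log k)}$, far beyond any power of $\log N$, and that one must therefore exploit cancellation or structure rather than a worst-case divisor bound, is also the correct assessment of why the conjecture remains open.

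Two small imprecisions worth flagging. First, you should separate $k=0$ from the maximum, since $R(0)=|A|$; the paper handles this by writing $\|f\|_4^4=|c_0|^2+2\sum_{m>0}|c_m|^2\le(1+\max_{m>0}r_A(m))\|f\|_2^4$. Second, for the full Bourgain conjecture the factorizations $k=de$ with $d,e$ of the same parity and $e=n_1+n_2\le 2N$ constrain $e$ to a \emph{dyadic} range (roughly $[\sqrt{k},2N]$ when $k\asymp N^2$), not to a genuinely short Ruzsa-type window of length $k^{1/2-\e}$ around $\sqrt{k}$. The Ruzsa-type short window appears only after the frequencies are restricted to $\{N,\dots,N+N^{\g}\}$; in that case $n_1+n_2\in[2N,2N+2N^\g]$, and this is what Conjecture~\ref{conj1.5} formalizes. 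So the full conjecture is not literally ``in Ruzsa territory'' via this naive Cauchy--Schwarz; it is harder, which only reinforces your conclusion that genuinely new input is needed.
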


Note that $\d$ must be at least $1/4$ because of (\ref{1.2}). 

This quite natural statement is probably hard to prove. It can also be shown that it is enough to verify this bound for the case where the coefficients of $f$ are equal to zero or one, that is, to prove that for any $A\subseteq \{n^2: n\leq N\}$ one has $E(A)\leq |A|^2(\log N)^{O(1)}$; see \cite{CG} for the details (Theorem 14) and connections to other problems in the area. 

In this paper we will focus on the following conjecture of J.~Cilleruelo and A.~C\'ordoba (see \cite{CC}), which can be thought of as a ``truncated'' version of Bourgain's one.

\begin{conj}\label{conj1.3}
For any $\g\in(0,1)$ there is $c(\g)>0$ such that, for any trigonometric polynomial $f$ with frequencies in the set $\{n^2: N \leq n\leq N+N^{\g}\}$, we have
$$
\|f\|_4 \leq  c(\g) \|f\|_2 .
$$
\end{conj}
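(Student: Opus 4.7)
My plan is to expand $\|f\|_4^4$ in Fourier and reduce the inequality to an arithmetic problem about representations of integers as sums of two squares with summands restricted to the short interval $I:=[N,N+N^\g]$. Writing $f(x)^2=\sum_m r_f(m)\,e(mx)$ with
\[r_f(m)=\ssum{n_1,n_2\in I\\ n_1^2+n_2^2=m}a_{n_1}a_{n_2},\]
Parseval yields $\|f\|_4^4=\sum_m |r_f(m)|^2$, and Cauchy--Schwarz applied to each $r_f(m)$ gives
\[\|f\|_4^4 \leq R(N,\g)\,\|f\|_2^4,\qquad R(N,\g):=\max_m\#\{(n_1,n_2)\in I^2:\,n_1^2+n_2^2=m\}.\]
So it is enough to prove $R(N,\g)\ll_\g 1$; this is the usual $B_2[g]$-style condition that implies a $\L_4$-inequality, specialising to the well-known fact that Sidon sets are $\L_4$-sets.

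Next I would translate $R(N,\g)\ll 1$ into a divisor-counting statement. Two representations $(n_1,n_2),(n_3,n_4)\in I^2$ of the same integer $m$ satisfy
\[(n_1-n_3)(n_1+n_3)=(n_4-n_2)(n_4+n_2),\]
which forces $n_1+n_3\in[2N,2N+2N^\g]$ to be a divisor of an integer of size $\ll N^{1+\g}$. Equivalently, via the Gaussian factorisation $m=(n_1+in_2)(n_1-in_2)$ in $\Z[i]$, one counts divisors of $m$ lying in a narrow angular sector of aperture $\ll N^{\g-1}$ centred on $\arg z=\pi/4$. Either formulation reduces matters to the following question: how many divisors in $[2N,2N+2N^\g]$ can an integer $m\ll N^{1+\g}$ have?

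This is exactly a form of the Ruzsa divisor conjecture recalled in the abstract. Granted Ruzsa's conjecture in the appropriate range, one obtains $R(N,\g)\ll_\g 1$ for every $\g<1$, and hence Conjecture \ref{conj1.3}. Unconditionally, I would attempt to combine Ford's sharp bounds on $H(x,y,z)$---the number of integers up to $x$ with some divisor in $[y,z]$---with a second-moment refinement: rather than extracting the crude $\max_m r(m)$, one estimates $\sum_m r(m)^2$ directly and exploits the fact that integers with many divisors clustered in the target window are rare.

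The main obstacle is exactly this divisor-in-short-intervals input. Ruzsa's conjecture is open and morally of the same depth as Conjecture \ref{conj1.3}, so the unconditional route can only push $\g$ up to some threshold; the golden-ratio value $\g_0=(\sqrt 5-1)/2$ is a natural barrier that one arrives at after optimising Ford-type estimates against the combinatorial reduction above. Reaching the full range $\g<1$ appears to require genuinely new input on divisors in short intervals rather than any purely harmonic-analytic manoeuvre.
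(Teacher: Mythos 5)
Your Parseval plus Cauchy--Schwarz opening is fine and correctly reduces the claim to showing $R:=\max_m\#\{(n_1,n_2)\in I^2:n_1^2+n_2^2=m\}\ll_\g 1$. The gap is in your translation of $R$ into a divisor-counting statement. For two representations $(n_1,n_2),(n_3,n_4)$ of the same $m$ you observe that $n_1+n_3$ divides $(n_4-n_2)(n_4+n_2)\ll N^{1+\g}$, but this integer \emph{varies with} $(n_1,n_2)$: different representations are declared divisors of different integers, so no bound on $R$ follows from a bound on the number of divisors of a single integer in $[2N,2N+2N^\g]$. A sum of two squares simply does not factor over $\Z$, which is why this route stalls. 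The paper avoids the problem by expanding $|f|^2$ rather than $f^2$, working with the difference coefficients $c_m=\sum_{n_1-n_2=m}a_{n_1}\overline{a_{n_2}}$ over $A=\{(N+s)^2:0\leq s\leq N^\g\}$: a difference of two squares does factor, $m=(N+s_1)^2-(N+s_2)^2=(s_1-s_2)(2N+s_1+s_2)$, and each representation of a fixed $m>0$ yields a distinct divisor $2N+s_1+s_2\in[2N,2N+2N^\g]$ \emph{of $m$ itself}, giving the clean inequality $r_A(m)\leq\tau(m;2N,2N+2N^\g)$. Feeding this into the analogue of your Cauchy--Schwarz bound (the paper's (\ref{2.1})) shows that Conjecture~\ref{conj1.5} implies Conjecture~\ref{conj1.3} directly.

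Two smaller points. Your Gaussian-integer/narrow-sector reformulation is a legitimate alternative to the rational-divisor one, but you do not carry it through, and as written only the (gappy) rational version is offered. And for the unconditional range you gesture at Ford's $H(x,y,z)$ bounds and a second-moment refinement, whereas the paper's unconditional input is the elementary lower bound for $\lcm(d_1,\dots,d_r)$ in terms of the $d_i$ and their pairwise gcds (Lemma~\ref{lem3.2}); so even once the reduction is repaired, your proposed unconditional route is a genuinely different one and remains unverified.
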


This can be easily proved for $\g\leq1/2$, since the set  $\{n^2: N \leq n\leq N+2\sqrt2N^{1/2}\}$ is a Sidon set (see the Appendix for a short proof); however, it seems that it has been open for any $\g>1/2$.

Let $\g_0=\frac{\sqrt5-1}{2}=0.618\ldots$ . Our main result is the following.

\begin{theorem}\label{th1.4}
For any $\e>0$ and any trigonometric polynomial $f$ with frequencies in the set $\{n^2: N \leq n\leq N+N^{\g_0-\e}\}$, 
$$
\|f\|_4 \ll \e^{-1/4}\|f\|_2.
$$
\end{theorem}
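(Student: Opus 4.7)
The plan is to reduce the $L^4$ bound to a divisor-counting statement and then control the resulting divisor counts. Expanding $|f(x)|^2 = \sum_k d_k e(kx)$, where
$$d_k = \sum_{n_1^2-n_2^2 = k} a_{n_1}\overline{a_{n_2}}$$
(the sum is over $n_1, n_2 \in [N, N+H]$), the term $k=0$ contributes $d_0 = \|f\|_2^2$ exactly, while for $k \neq 0$ Cauchy--Schwarz coefficient-wise gives $|d_k|^2 \leq g(k)\sum_{n_1^2-n_2^2=k}|a_{n_1}|^2|a_{n_2}|^2$, with $g(k) := \#\{(n_1,n_2)\in[N,N+H]^2: n_1^2-n_2^2=k\}$. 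Parseval then yields
$$\|f\|_4^4 = |d_0|^2+\sum_{k\ne 0} |d_k|^2 \leq \|f\|_2^4 + \bigl(\max_{k\neq 0}g(k)\bigr)\|f\|_2^4,$$
so it is enough to prove $G := \max_{k\neq 0} g(k) \ll \varepsilon^{-1}$ when $H = N^{\gamma_0-\varepsilon}$. Factoring $|k| = (n_1-n_2)(n_1+n_2)$ identifies $g(k)$ with the number of divisors of $|k|$ that lie in $[2N, 2N+2H]$ (the complementary-divisor bound $|k|/d \leq H$ being automatic from $|k| \leq 2NH+H^2$), so the problem reduces to a divisor-in-short-interval estimate.

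My next step is to bound the number of divisors of any $k \leq 2NH$ in the short interval $[2N, 2N+2H]$ of length $2H = 2N^\gamma$. The starting observation is pairwise: two such divisors $d_1<d_2$ of $k$ have $\gcd(d_1,d_2)\mid d_2-d_1 \leq 2H$, so
$$\lcm(d_1,d_2) = \frac{d_1 d_2}{\gcd(d_1,d_2)} \geq \frac{(2N)^2}{2H} = \frac{2N^2}{H},$$
and since $\lcm(d_1,d_2)\mid k\leq 2NH$, this forces $H \geq N^{1/2}$, the classical Sidon threshold. To pass beyond, I would iterate the argument for clusters of $\ell$ divisors, tracking the multiplicative structure of $k$ through gcds of triples and higher, combined with the further condition that each divisor pair $(d, k/d)$ must actually arise from a lattice point in the box $[N,N+H]^2$. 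The exponent $\gamma_0 = (\sqrt{5}-1)/2$, the positive root of $\gamma^2+\gamma=1$, emerges as the balance point of the two competing exponents $2H=2N^\gamma$ (interval length) and $k \leq 2N^{1+\gamma}$ (the relevant size of $k$) in this iterated divisor analysis.

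The principal obstacle is obtaining the sharp $O(\varepsilon^{-1})$ dependence, rather than a weaker quantitative form such as $(\log N)^{O(1)}$ or $N^{o(1)}$. The naive pairwise argument saturates exactly at the Sidon exponent $1/2$ and gives no information in the $\gamma_0$ regime, so a genuinely new divisor-in-short-interval input is needed --- presumably of the Ruzsa-type nature alluded to in the abstract and the paper's title, but adapted from a neighbourhood of $\sqrt{k}$ to the interval $[2N, 2N+2H]$. Once $G \ll \varepsilon^{-1}$ is established, the Cauchy--Schwarz reduction above immediately yields $\|f\|_4 \ll \varepsilon^{-1/4}\|f\|_2$, completing the proof.
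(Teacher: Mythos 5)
Your Cauchy--Schwarz/Parseval reduction to the divisor-counting statement $\max_{0<|k|\le 2NH+H^2}\tau(k;2N,2N+2H)\ll\e^{-1}$ is exactly the paper's Section~\ref{sec2}, and your pairwise observation $\gcd(d_1,d_2)\mid d_2-d_1\le 2H$ correctly recovers the Sidon threshold $H\asymp N^{1/2}$. But that is where the argument actually stops. The heart of the theorem --- moving the exponent from $1/2$ to $\g_0$ --- is left to the unspecified instruction to ``iterate the argument for clusters of $\ell$ divisors, tracking the multiplicative structure of $k$ through gcds of triples and higher,'' after which you concede that ``a genuinely new divisor-in-short-interval input is needed.'' That input is precisely what the paper supplies (Theorem~\ref{th1.6}, via Lemma~\ref{lem3.2}), and it is absent from your proposal.

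The missing ingredient is a lower bound for the lcm of $r$ integers in terms of $\prod_i d_i$ and, notably, only the \emph{pairwise} gcds $\prod_{i<j}(d_i,d_j)$ --- no triple or higher gcds are ever invoked: for $2\le s\le r$ and $c=r-s+1$,
$$
[d_1,\ldots,d_r]\ \ge\ \Bigl(\prod_{1\le i_1<\cdots<i_s\le r}[d_{i_1},\ldots,d_{i_s}]\Bigr)^{1/\binom{r}{s}}\ \ge\ \frac{\prod_{i=1}^r d_i^{2/(c+1)}}{\prod_{1\le i<j\le r}(d_i,d_j)^{2/(c(c+1))}},
$$
proved prime by prime via a weighted rearrangement of the $p$-adic valuations. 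Feeding in $d_i\ge 2N$, $(d_i,d_j)\le 2k$, and $[d_1,\ldots,d_r]\le m\le 3Nk$ gives $k^{c^2+r^2+c-r}\gg N^{2rc-c^2-c}$; the choice $c=\lfloor\g_0 r\rfloor$ then yields $k\gg N^{\g_0-O(1/r)}$, so with $k=N^{\g_0-\e}$ every admissible $m$ has at most $r-1\ll\e^{-1}$ divisors in the window, giving the stated $\e^{-1/4}$ loss. Two further notes on your sketch: the lattice-point constraint you propose to exploit (``each divisor pair $(d,k/d)$ must arise from a lattice point in $[N,N+H]^2$'') is neither used nor needed --- the estimate is a pure divisor statement for arbitrary $m\le 3Nk$; and your identification of $\g_0$ as the root of $\g^2+\g=1$ is the right balance point, but recognizing where the threshold should lie is not the same as proving the bound that places it there.
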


We reduce the proof of Theorem \ref{th1.4} to obtaining upper bounds for the number of divisors of a positive integer in a short interval. To be more precise, we make the following conjecture (here $\tau(n;a,b)=\#\{d|n: a\leq d\leq b \}$ denotes the numbers of divisors of $n$ lying in the interval $[a;b]$).

\begin{conj}\label{conj1.5}
For any $\g\in(0,1)$ there exists $C(\g)>0$ such that, for all $m\leq 3N^{1+\g}$, 
$$
\tau(m; 2N, 2N+2N^{\g})\leq C(\g).
$$
\end{conj}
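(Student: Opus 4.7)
Let $d_1 < d_2 < \cdots < d_k$ enumerate the divisors of $m$ lying in $[2N, 2N+2N^\gamma]$. The starting point is that, since $d_i$ and $d_j$ both divide $m$, so does $\lcm(d_i, d_j)$, which gives
\[
\gcd(d_i, d_j) \;=\; \frac{d_i d_j}{\lcm(d_i, d_j)} \;\geq\; \frac{d_i d_j}{m} \;\geq\; \frac{(2N)^2}{3N^{1+\gamma}} \;=\; \tfrac{4}{3}\,N^{1-\gamma}.
\]
On the other hand, $\gcd(d_i,d_j)$ divides the positive integer $d_j - d_i \leq 2N^\gamma$, so we arrive at the two-sided bound
\[
\tfrac{4}{3}\,N^{1-\gamma} \;\leq\; \gcd(d_i, d_j) \;\leq\; 2N^\gamma \qquad (i \neq j).
\]

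\textbf{The range $\gamma < 1/2$.} For $\gamma < 1/2$ and $N > N_0(\gamma)$ these two inequalities are incompatible, forcing $k \leq 1$; for $N \leq N_0(\gamma)$ the naive bound $k \leq \tau(m) \leq \tau(3 N_0(\gamma)^{1+\gamma})$ yields a constant depending only on $\gamma$. Together this proves Conjecture \ref{conj1.5} in full in the range $\gamma < 1/2$.

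\textbf{The range $\gamma \geq 1/2$ and the main obstacle.} For $\gamma \geq 1/2$ the pairwise bound above is internally consistent, and one must exploit higher-order structure. A first idea is to set $g := \gcd(d_1, \ldots, d_k)$, write $d_i = g e_i$, and observe that the $e_i$ are divisors of $m/g$ in an interval of the same \emph{relative} length $N^{\gamma - 1}$ around $2N/g$; setting $N' := N/g$ and $(N')^{\gamma'} := N^\gamma/g$ one finds $\gamma' < \gamma$ strictly whenever $g > 1$, which suggests an induction on $\gamma$. The difficulty is twofold: the ambient-size hypothesis degrades to $m/g \leq 3 g\,(N')^{1+\gamma'}$, so the implicit constant inflates by a factor $g$ per step; and, more seriously, the reduction yields nothing when $g = 1$, which can certainly occur (three divisors whose pairwise gcds are distinct large primes have trivial triple-gcd). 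I expect this to be the decisive obstruction, since Conjecture \ref{conj1.5} is essentially equivalent to the Ruzsa conjecture on divisors of $N$ near $\sqrt N$, which is open. Accordingly, this approach can at best be pushed to a partial version of the conjecture --- e.g.\ the range $\gamma < \gamma_0 = (\sqrt 5 - 1)/2$ underlying Theorem \ref{th1.4} --- rather than the full statement.
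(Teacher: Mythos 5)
The statement you were given is a \emph{Conjecture}, which the paper itself does not prove; you correctly recognize this, and your honesty about proving only a partial case is appropriate. The paper's only positive result in this direction is Theorem~\ref{th1.6} (proved as Theorem~\ref{th3.1}), covering the strictly larger range $\gamma < \gamma_0 = (\sqrt5-1)/2 \approx 0.618$. Your pairwise-gcd argument for $\gamma < 1/2$ is correct: $[d_i,d_j]\mid m$ gives $(d_i,d_j)=d_id_j/[d_i,d_j]\geq (2N)^2/(3N^{1+\gamma})$, while $(d_i,d_j)\mid(d_j-d_i)$ gives $(d_i,d_j)\leq 2N^\gamma$, and these collide for $\gamma<1/2$; this is exactly the $r=2$ degenerate case of the paper's method and is the same content as Proposition~\ref{prop5.1} (the interval of squares of relative length $N^{1/2}$ is a Sidon set).

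The paper's passage from $1/2$ to $\gamma_0$ relies on a device your proposal does not find, namely Lemma~\ref{lem3.2}: a prime-by-prime majorization inequality which, for $2\leq s\leq r$ and $c=r-s+1$, lower-bounds
\[
\Bigl(\prod_{1\leq i_1<\cdots<i_s\leq r}[d_{i_1},\ldots,d_{i_s}]\Bigr)^{1/\binom{r}{s}}
\;\geq\;
\frac{\prod_{i=1}^{r} d_i^{2/(c+1)}}{\prod_{1\leq i<j\leq r}(d_i,d_j)^{2/(c(c+1))}}.
\]
Inserting $d_i\geq 2N$, $(d_i,d_j)\leq 2k$, $m\leq 3Nk$, and optimizing the free parameter $c\approx\gamma_0 r$ shows that if $m$ has $r\geq 3$ divisors in the window then $k\gg N^{\gamma_0-O(1/r)}$, equivalently $\tau(m;2N,2N+2k)\leq r-1$ below that threshold. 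This genuinely exploits all $r$ divisors at once, which is why the exponent improves past $1/2$; it needs $r\geq 3$ and a second optimization parameter that your two-divisor inequality simply does not possess. Your alternative idea of quotienting by $g=\gcd(d_1,\ldots,d_k)$ and inducting on $\gamma$ is not what the paper does, and your diagnosis of why it stalls (it yields nothing when $g=1$) is correct; Lemma~\ref{lem3.2} is precisely what extracts information from the pairwise gcds without assuming a common factor. One small overstatement: you call Conjecture~\ref{conj1.5} ``essentially equivalent'' to Ruzsa's Conjecture~\ref{conj1.7}, whereas the paper claims only that the two are ``very close'' and that progress on one should transfer to the other, not that they are equivalent.
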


Conjecture \ref{conj1.5} with a fixed $\g$ implies Conjecture \ref{conj1.3} with the same exponent $\g$. We prove Conjecture \ref{conj1.3} for any $\g<\g_0$.

\begin{theorem}\label{th1.6}
Let $\e>0$ and $k=N^{\g_0-\e}$. Then for any $m\leq 3Nk$,
$$
\tau(m; 2N, 2N+2k) \ll \e^{-1}.
$$
\end{theorem}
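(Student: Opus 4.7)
Let $d_1 < d_2 < \cdots < d_s$ denote the divisors of $m$ in $[2N, 2N + 2k]$; the plan is to show $s \ll 1/\e$. The starting point is the two-divisor identity $\gcd(d_i, d_j)\cdot \lcm(d_i, d_j) = d_i d_j$ together with $\lcm(d_i, d_j)\mid m$ and $\gcd(d_i, d_j)\mid d_j - d_i$. These give
\[
\frac{4N}{3k} \;\leq\; \frac{d_id_j}{m} \;\leq\; \gcd(d_i, d_j) \;\leq\; |d_j - d_i| \;\leq\; 2k,
\]
so consecutive divisors are spaced by at least $4N/(3k)$, and hence $s \leq 1 + 3k^2/(2N)$. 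This is useful only when $\g \leq 1/2$, so a nontrivial refinement is needed to reach $\g_0 = (\sqrt{5}-1)/2$.

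Next, I would fix $d_0 := d_1$ and partition the remaining $d_i$ by their common factor $g_i := \gcd(d_0, d_i)$. Writing $d_0 = g_i b_i$ and $d_i = g_i r_i$ with $\gcd(b_i, r_i) = 1$, each $r_i$ is a divisor of $m/g_i$ lying in the window $[2N/g_i,\ 2N/g_i + 2k/g_i]$, so
\[
s \;\leq\; \sum_{\substack{g\mid d_0 \\ 4N/(3k)\leq g \leq 2k}} \tau\!\left(\frac{m}{g};\ \frac{2N}{g},\ \frac{2N}{g} + \frac{2k}{g}\right).
\]
Each summand is a divisor-counting problem of the same shape, with new parameters $N_g = N/g$, $k_g = k/g$, $m_g = m/g$. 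The idea is to iterate this reduction, always choosing $g$ near the bottom of the admissible range so that $k_g$ shrinks as fast as possible; with $g \asymp 4N/(3k)$, the interval length collapses from $2k$ to $3k^2/(2N)$ after one round, and after $j$ rounds to $\asymp k^{j+1}/N^j$. The iteration terminates (the interval length drops below $1$) once $(j+1)(\g_0 - \e) < j$, which for $\g_0 = (\sqrt{5}-1)/2$ already happens at a bounded number of steps when $\e=0$; the $\e$-slack then forces at most $O(1/\e)$ iteration rounds to absorb the logarithmic loss, matching the stated bound.

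The main technical obstacle is the size inflation in the sub-problem: the hypothesis $m \leq 3Nk$ relaxes to $m_g \leq 3g\cdot N_g k_g$, i.e.\ it is violated by a factor of $g$, which can be as large as $2k \asymp N^{\g_0}$. A direct recursive application of Theorem~\ref{th1.6} is therefore not available; instead I would expect to establish a slightly more flexible intermediate statement tolerating $m \leq C\cdot Nk$ with a bound depending mildly on $C$, and to verify that the dyadic sum over the admissible $g$'s at each iteration level contributes only an $O(1)$ multiplicative factor. The precise value $\g_0 = (\sqrt{5}-1)/2$ should emerge from the fixed-point relation $\g_0^2 + \g_0 = 1$, which balances the interval-length decrease at each round against the size inflation of $m$ that it induces; pinpointing this balance is where the technical core of the argument lies.
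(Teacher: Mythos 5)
Your opening observations are correct and in fact reproduce the two-divisor argument that handles the case $\gamma\le 1/2$: from $\gcd(d_i,d_j)\mid d_j-d_i$ and $\gcd(d_i,d_j)\ge d_id_j/m\ge 4N/(3k)$ one gets the $4N/(3k)$ spacing and hence $s\le 1+3k^2/(2N)$. But the iterative scheme you then sketch has a genuine gap that you touch on but do not close, and it is exactly where the difficulty lies. After you group the $d_i$ by $g=\gcd(d_0,d_i)$, the bound you obtain is
\[
s \;\le\; \sum_{\substack{g\mid d_0 \\ 4N/(3k)\le g\le 2k}} \tau\!\left(\frac{m}{g};\ \frac{2N}{g},\ \frac{2N}{g}+\frac{2k}{g}\right),
\]
and the number of terms in this sum is $\tau\bigl(d_0;\,4N/(3k),\,2k\bigr)$, i.e.\ the divisor count of $d_0\approx 2N$ in a window that is a priori very wide (roughly $[N^{1-\gamma_0},N^{\gamma_0}]$). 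This is itself a hard quantity to bound, and nothing in the proposal forces it to be $O(1)$; the assertion that the sum over admissible $g$ contributes an $O(1)$ factor is precisely what needs proof and is left as a hope. Moreover, as you note, the ratio $m_g/(N_gk_g)$ inflates by a factor $g$, so the sub-problems are not genuinely of the same shape, and the proposed ``slightly more flexible intermediate statement'' is unspecified. One also checks that applying the same trivial spacing bound at the second level gives $1+3k^2/(2N)$ again --- no decay --- so the gain per round that the heuristic $L_j\asymp k^{j+1}/N^j$ relies on is not established by the reduction as stated.

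The paper takes a different and non-iterative route. It proves a single combinatorial inequality (Lemma~\ref{lem3.2}): for any integers $d_1,\dots,d_r$ and $2\le s\le r$ with $c=r-s+1$,
\[
\left(\prod_{1\le i_1<\dots<i_s\le r}[d_{i_1},\dots,d_{i_s}]\right)^{1/\binom{r}{s}} \ \ge\ \frac{\prod_{i=1}^r d_i^{\,2/(c+1)}}{\prod_{1\le i<j\le r}(d_i,d_j)^{\,2/(c(c+1))}},
\]
established prime-by-prime via a majorization argument on the sorted exponent vectors. Applying it once, with every $[d_{i_1},\dots,d_{i_s}]\le m\le 3Nk$, every $d_i\ge 2N$, and every $(d_i,d_j)\le 2k$, yields $k^{c^2+r^2+c-r}\gg N^{2rc-c^2-c}$, and optimizing $c\approx\gamma_0 r$ gives $k\gg N^{\gamma_0-O(1/r)}$, hence $r\ll 1/\e$. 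The golden-ratio exponent thus comes from maximizing $\frac{2\alpha-\alpha^2}{1+\alpha^2}$ over $\alpha\in(0,1)$, whose critical point equation is indeed $\alpha^2+\alpha=1$, so your guessed fixed-point relation is right, but the mechanism that produces it in the paper is a one-shot LCM inequality rather than a recursion. That lemma is the missing ingredient your proposal would need to be replaced by (or to discover in disguise), and without it or a worked-out substitute for the sum over $g$, the argument does not reach $\gamma_0$.
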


As we noted, this theorem implies Theorem \ref{th1.4}.

Our Conjecture \ref{conj1.5} is very close to the following conjecture of I.~Ruzsa, which was mentioned in the papers \cite{CJ} and \cite{ER}.

\begin{conj}\label{conj1.7}
For any $\e>0$ there is $C_1(\e)>0$ such that, for any positive integer $N$, 
$$
\tau(N; N^{1/2}, N^{1/2}+N^{1/2-\e}) \leq C_1(\e).
$$ 
\end{conj}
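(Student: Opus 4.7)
The plan is to combine the pair-wise GCD inequality for the divisors $d_i$ with the same inequality applied to the complementary divisors $q_i := m/d_i$, and to iterate the resulting reduction via the map $\gamma \mapsto 2 - 1/\gamma$ whose behaviour at the golden ratio $\gamma_0$ governs the final bound.

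Let $d_1 < d_2 < \cdots < d_t$ be the divisors of $m$ in $[2N,2N+2k]$. Since $\lcm(d_i, d_j) \mid m \leq 3Nk$ and $d_i, d_j \geq 2N$, the pair inequality gives $\gcd(d_i,d_j) \geq d_i d_j/m \geq 4N/(3k)$; and since $\gcd(d_i,d_j) \mid d_j - d_i \leq 2k$, we also have $\gcd(d_i,d_j) \leq 2k$. Summing over consecutive pairs already yields the crude estimate $t \leq 1 + 3k^2/(2N)$, which is insufficient once $\gamma_0 - \e > 1/2$.

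The decisive step is to pass to the complementary divisors $q_i = m/d_i$. From $d_i \geq 2N$ we get $q_i \leq 3k/2$, and the identity $d_i q_i = d_j q_j$ combined with $d_i, d_j \in [2N, 2N+2k]$ yields
\[
|q_i - q_j| = \frac{m\,|d_j - d_i|}{d_i d_j} \leq \frac{3k^2}{2N}.
\]
So the $t$ divisors $q_i$ of $m$ lie in a common interval of length $O(k^2/N)$ near a point of size $O(k)$: the same type of problem at the smaller scale $(N',k') \asymp (k,\, k^2/N)$. Writing $k = N^\gamma$, the effective exponent transforms as $\gamma \mapsto \phi(\gamma) := 2 - 1/\gamma$, and the defining relation $\gamma_0^2 + \gamma_0 = 1$ is precisely $\phi(\gamma_0) = \gamma_0^2$. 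Since $\gamma_0^2 < 1/2$, a single iteration from $\gamma = \gamma_0 - \e$ drops us below the Sidon threshold $\gamma = 1/2$, in which regime the argument of the Appendix gives a bounded divisor count. The offset $\e$ is amplified by the derivative $\phi'(\gamma_0) = 1/\gamma_0^2$, and tracking this loss carefully through the reduction produces the factor $\e^{-1}$ in the final bound.

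The main obstacle is that the constraint $m \leq 3Nk$ does not automatically transfer to the reduced $q$-problem: a direct computation shows $3N'k' \asymp k^3/N$, which is much smaller than $m$ whenever $k \ll N^{2/3}$. Thus Theorem \ref{th1.6} cannot simply be applied recursively at the smaller scale. Instead one must couple the two pair inequalities---for the $d_i$'s and for the $q_i$'s---into a single balancing argument directly in the parameters $(N, k, m)$, and this quantitative coupling, which exactly uses the golden-ratio identity at the boundary and loses a factor growing like $\e^{-1}$ at the critical exponent, constitutes the technical heart of the proof.
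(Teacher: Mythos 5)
The statement you are addressing is Conjecture~\ref{conj1.7}, which the paper presents as an \emph{open} conjecture of Ruzsa; the paper does not prove it, and only remarks that the case $\e>1/4$ follows from the circle of ideas behind Theorem~\ref{th1.6}. So there is no proof in the paper to compare against, and a correct proposal would have to be a genuine new result. Beyond that, you have set up the wrong problem: throughout you work with divisors $d_i\in[2N,2N+2k]$ of a number $m\le 3Nk$, which is the setting of Conjecture~\ref{conj1.5} and Theorem~\ref{th1.6}, not of Conjecture~\ref{conj1.7}. In Ruzsa's conjecture the integer is $N$ itself and the divisors lie near $N^{1/2}$, so the complementary divisors $q_i=N/d_i$ also lie in an interval near $N^{1/2}$ of comparable length; passing to them does not change the scale, and the iterative mechanism $\gamma\mapsto 2-1/\gamma$ that you describe has no analogue in that problem.

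Even read as a plan for Theorem~\ref{th1.6}, your argument stops exactly where the difficulty begins: you yourself note that the recursion does not close because $m$ fails the size constraint needed at the new scale, and you then state that the required ``quantitative coupling\ldots constitutes the technical heart of the proof'' without supplying it. That is a gap, not a proof. Moreover, the paper's actual argument does not proceed by any such iteration: it applies Lemma~\ref{lem3.2}, a multilinear lower bound on $\lcm(d_{i_1},\dots,d_{i_s})$ taken over all $r$ divisors simultaneously, and then optimizes over $s$ (equivalently over $c=r-s+1$). The exponent $\g_0$ arises there as $\arg\max_{0<\a<1}\frac{2\a-\a^2}{1+\a^2}$, not as a fixed point of $\g\mapsto 2-1/\g$; pairwise GCD bounds of the kind you start from are strictly weaker and, as you observe, only reach $\g\le 1/2$.
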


Conjectures \ref{conj1.5} and \ref{conj1.7} look very similar, and probably a progress in one of them would imply a progress in the other. Conjecture \ref{conj1.7} is known to be true for $\e>1/4$: it can be shown by the same idea we use in the proof of Theorem \ref{th1.6}, and also follows from Corollary 3.8 in \cite{CT}.  

Our method can also be used for proving analogues of Theorem \ref{th1.4} for cubes and higher powers. However, it is known that $\|\sum_{n=1}^Ne(n^3x)\|_4\asymp N^{1/2}$: this inequality immediately follows from the result of C.~Hooley \cite{Hoo} (it is also shown in the proof of Theorem 2.6 in \cite{Nat} that the number of nontrivial solutions of $a_1^3+b_1^3=a_2^3+b_2^3$ in $0\leq a_i,b_i\leq N$ is $O(N^{5/3+\e})$ for any $\e>0$; we refer the interested reader to the work \cite{Woo} for an overview of bounds for cubic exponential sums). Thus, unlike the case of squares, it is logical to conjecture that the set of cubes $\{n^3: n\in\N \}$ is a $\L_4$-set and it is natural to expect it for higher powers too. Note also that there is a conjecture of P.~Erd\H{o}s which asserts that the set of fifth powers $\{n^5: n\in\N \}$ is a Sidon set (again, probably the same holds for higher powers as well). We do not address any of these problems in this paper.

\medskip 

\textbf{Notation.} We use Vinogradov's $\ll$ notation: both $F\ll G$ and $F=O(G)$ mean that there exists a constant $C>0$ such that $|F|\leq CG$. We write $F\asymp G$ if $G\ll F\ll G$. We also use $(a,b)$ to denote the greatest common divisors of $a$ and $b$, and $[d_1,...,d_r]$ for the least common multiple of $d_1,...,d_r$.

\medskip

\textbf{Acknowledgements.} This research was carried out at Lomonosov Moscow State University with the financial support of the Russian Science Foundation (grant no. 22-11-00129).

\section{Proof of Conjecture \ref{conj1.3} under Conjecture \ref{conj1.5} } \label{sec2}

We begin with the following general estimate.

\begin{lem}\label{lem2.1}
Let $A$ be a finite set of integers with $|A|\geq2$ and
$$
r_A^-(m)=\#\{(n_1,n_2)\in A\times A: n_1-n_2=m\}.
$$
Then for $f(x)=\sum_{n\in A}a_n e(nx)$, we have
\begin{equation}\label{2.1}
\|f\|_4 \leq \left(1+\max_{m>0} r_A^-(m)\right)^{1/4}\|f\|_2 \, .
\end{equation}
\end{lem}

\begin{proof}
We have $|f(x)|^2=f(x)\overline{f(x)}=\sum_mc_me(mx)$, where
$$
c_m=\sum_{\substack{n_1,n_2\in A \\ n_1-n_2=m}}a_{n_1}\overline{a_{n_2}}
$$
(if a number $m$ does not have representations of the form $m=n_1-n_2$ with $n_1,n_2\in A$, then we set $c_m=0$). By the Cauchy-Schwarz inequality,
$$
|c_m|^2\leq r_A^-(m)\sum_{\substack{n_1,n_2\in A \\ n_1-n_2=m}}|a_{n_1}|^2|a_{n_2}|^2.
$$
Note also that $c_0=\sum_{n\in A}|a_n|^2=\|f\|_2^2$. Summing, we have
\begin{multline*}
\|f\|_4^4=\int_0^1\left|\sum_mc_me(mx)\right|^2dx=\sum_m|c_m|^2=|c_0|^2+\sum_{m\neq 0}|c_m|^2\\
\leq \|f\|_2^4+\sum_{m\neq 0} r_A^-(m)\sum_{\substack{n_1,n_2\in A \\ n_1-n_2=m}}|a_{n_1}|^2|a_{n_2}|^2 \leq \left(1+\max_{m\neq 0}r_A^-(m)\right)\cdot\|f\|_2^4 \, ,
\end{multline*} 
and, since $r_A^-(m)=r_A^-(-m)$,
\begin{equation*}
\|f\|_4 \leq \left(1+\max_{m>0} r_A^-(m)\right)^{1/4}\|f\|_2 \, .
\end{equation*}
This concludes the proof.
\end{proof}

\begin{rem}
Note that the same inequality can be written for an infinite set $A$, though the right-hand side may not be finite. However, if $A$ is a Sidon set, then any nonzero $m$ has at most one representation in the form $m=a_1-a_2$ with $a_i\in A$, and thus the inequality (\ref{2.1}) implies that any Sidon set is a $\L_4$-set, as was mentioned in the introduction.
\end{rem}

\begin{rem}
A more standard estimate involves the number 
$$
r^+_A(m)=\#\{(n_1,n_2)\in A\times A: n_1+n_2=m\}
$$
of representations of an integer $m$ as the sum of two elements of $A$. Using the fact that $f^2(x)=\sum_{m}c_m'e(mx)$, where 
$$
c'_m=\sum_{\substack{n_1,n_2\in A\\ n_1+n_2=m}}a_{n_1}a_{n_2},
$$ 	
and arguing as in the proof of Lemma \ref{lem2.1}, one can show that
$$
\|f\|_4 \leq \left(\max_{m\in\Z} r^+_A(m)\right)^{1/4}\|f\|_2 
$$
(which is also the inequality (6.1) in \cite{CG}). However, as we will see below, it is crucial for our argument to use the bound in terms of $r_A^-(m)$.
\end{rem}

Now let $A=\{n^2: N\leq n \leq N+k \}$ for some $1\leq k\leq N$. Suppose that some $m>0$ is represented as $m=n_1-n_2$ with $n_1,n_2\in A$. Write $n_i=(N+s_i)^2$, $i=1,2$, where $0\leq s_i\leq k$. Then
$$m=(N+s_1)^2-(N+s_2)^2=2Ns_1+s_1^2-2Ns_2-s_2^2=(s_1-s_2)(2N+s_1+s_2).
$$
It follows that $m\leq 2Nk+k^2\leq 3Nk$ and 
$$
r_A^-(m)\leq \tau(m; 2N, 2N+2k)=\#\{d|m: 2N\leq d\leq 2N+2k\}.
$$ 
Thus we can rewrite (\ref{2.1}) as 
\begin{equation}\label{2.2}
\|f\|_4 \leq \left(1+\max_{1\leq m\leq 3Nk} \tau(m; 2N, 2N+2k)\right)^{1/4}\|f\|_2 \, .
\end{equation}	
Now we let $k=N^{\g}$; then from Conjecture \ref{conj1.5} we have
$$
\|f\|_4 \leq \left(1+C(\gamma)\right)^{1/4} \|f\|_2, 
$$ 
and Conjecture \ref{conj1.3} follows with $c(\g)=(1+C(\g))^{1/4}$.

\section{Proof of Theorem \ref{th1.4}}	

Recall that $\g_0=\frac{\sqrt5-1}{2}$. We will prove the following version of Theorem \ref{th1.4}.

\begin{theorem}\label{th3.1}
Let $r\geq3$ be a positive integer and $k<c_0N^{\g_0-c_1/r}$, where $c_0$ and $c_1$ are some absolute constants. Then
$$
\max_{m\leq 3Nk}\tau(m; 2N,2N+2k) \leq r-1.
$$	
\end{theorem}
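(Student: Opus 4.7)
My plan is to argue by contradiction: assume some $m\leq 3Nk$ has divisors $d_1<d_2<\cdots<d_r$ all lying in $[2N,2N+2k]$, and deduce that $k$ must be large. The first ingredient is the elementary ``consecutive-gap'' bound: for any $i<j$, since $\lcm(d_i,d_j)=d_id_j/\gcd(d_i,d_j)$ divides $m$,
\[
\gcd(d_i,d_j)\ \geq\ \frac{d_id_j}{m}\ \geq\ \frac{4N}{3k},
\]
while $\gcd(d_i,d_j)\mid d_j-d_i$. Applied to consecutive pairs and summed telescopically,
\[
(r-1)\cdot\frac{4N}{3k}\ \leq\ d_r-d_1\ \leq\ 2k,\qquad\text{i.e.,}\qquad r-1\ \leq\ \frac{3k^2}{2N}.
\]
This already settles the Sidon-type regime $\gamma\leq 1/2$, so from here on I can assume $k\gg\sqrt N$.

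The second step extracts from the $r$-divisor configuration a divisor-count problem for $d_1$ in place of $m$. Set $g_i:=\gcd(d_1,d_i)$ for $i\geq 2$; each $g_i$ is a divisor of $d_1\leq 2N+2k$ lying in $[4N/(3k),2k]$. For a fixed value $g$, every $d_i$ with $\gcd(d_1,d_i)=g$ must be a multiple of $g$ in $[2N,2N+2k]$, so there are at most $2k/g+1\leq 3k^2/(2N)+1$ of them. Writing $D$ for the number of distinct $g_i$-values, this gives
\[
r-1\ \leq\ D\Big(\frac{3k^2}{2N}+1\Big),
\]
and hence $d_1$ has $D\geq (r-1)/(3k^2/(2N)+1)$ divisors in the interval $[4N/(3k),2k]$; equivalently, via $g\mapsto d_1/g$, in $[N/k,3k/2]$.

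The third step is to iterate. Up to bounded multiplicative constants in the condition on the divided integer, the subproblem just produced has the same shape as the original, with rescaled parameters $N_1\asymp N/k$ and $k_1\asymp k$ and a new integer of size $\asymp N$. I would then set up the induction so that the threshold exponent $\gamma$ at successive levels satisfies a Fibonacci-type recurrence whose fixed point is precisely $\gamma_0=(\sqrt5-1)/2$, the positive root of $\gamma^2+\gamma=1$. Running the recursion to depth $\Theta(r)$ and collecting the counting loss $3k^2/(2N)+1$ at each level produces the bound $k<c_0 N^{\gamma_0-O(1/r)}$ of the statement.

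The hard part is keeping the recursion honest. The derived integer $d_1\leq 2N+2k$ exceeds the ideal target $3N_1k_1$ by a factor $1+O(k/N)$ at each step, which forces the inductive statement to be phrased with enough slack in the hypothesis $m\leq 3Nk$ to absorb these losses; all such multiplicative slack is packaged into $c_0$. A sharper subtlety is the trade-off between recursion depth and accumulated counting loss: going deeper moves the exponent closer to $\gamma_0$ but amplifies the implicit constants, and it is this balance that produces the $O(1/r)$ error in the exponent rather than a clean $\gamma_0$ threshold.
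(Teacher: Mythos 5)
Your first two steps are correct and even pleasant: the consecutive-gap bound $\gcd(d_i,d_j)\geq 4N/(3k)$ together with $\gcd(d_i,d_j)\mid d_j-d_i$ gives $r-1\leq 3k^2/(2N)$, and the counting of distinct values $g_i=\gcd(d_1,d_i)$ correctly shows $d_1$ has at least $(r-1)/(3k^2/(2N)+1)$ divisors in an interval of the form $[c\,N/k,\,c'k]$. However, the third step — the iteration — has a genuine gap, and the remainder of the proof does not go through as sketched.

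The problem is that the ``subproblem'' you produce is \emph{not} of the same shape as the original. In the original you have divisors of $m\leq 3Nk$ lying in a \emph{short} interval $[2N,2N+2k]$ with $k\ll N$, so length/start $\asymp k/N\ll1$. In the derived problem, $d_1\asymp N$ has many divisors in $[N/k,\,3k/2]$, an interval of length $\asymp k$ starting at $N/k$; for the regime $k\geq\sqrt N$ that you are now in, this interval is \emph{long} — its length exceeds its left endpoint — and in fact it is (multiplicatively) symmetric about $\sqrt{d_1}$. Running the same gap argument on this new configuration gives $\gcd(e_i,e_{i+1})\geq (N/k)^2/d_1\asymp N/k^2$ against an interval of length $\asymp k$, i.e.\ at most $\asymp k^3/N$ divisors — a weaker threshold $k\ll N^{1/3}$, not a stronger one. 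Iterating only makes the threshold worse. Relatedly, the natural recurrence for the exponent that your rescaling $N_1\asymp N/k$, $k_1\asymp k$ produces is $\gamma\mapsto\gamma/(1-\gamma)$, whose fixed point is $0$, not $\gamma_0$; the ``Fibonacci-type recurrence with fixed point $\gamma_0$'' you invoke is not actually realised by the iteration you set up, and the bookkeeping of the multiplicative losses $3k^2/(2N)+1$ through $\Theta(r)$ levels is never carried out.

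The paper's argument is quite different and avoids any recursion. It proves (Lemma~3.2) a sharp lower bound for $[d_1,\ldots,d_r]$ of the form
\[
m\ \geq\ [d_1,\ldots,d_r]\ \geq\ \frac{\prod_i d_i^{\,2/(c+1)}}{\prod_{i<j}(d_i,d_j)^{\,2/(c(c+1))}},\qquad c=r-s+1,
\]
then substitutes $d_i\geq 2N$, $(d_i,d_j)\leq 2k$, and $m\leq 3Nk$ to get $k^{c^2+r^2+c-r}\gg N^{2rc-c^2-c}$; choosing $c=\lfloor\gamma_0 r\rfloor$ and noting that $\gamma_0=\arg\max_{0<\alpha<1}\frac{2\alpha-\alpha^2}{1+\alpha^2}$ with maximum value $\gamma_0$ yields $k\gg N^{\gamma_0-O(1/r)}$ in one shot. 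In fact your opening move is essentially the $s=r$ case ($c=1$) of that lemma — you would need the full family of estimates over $c$, and the optimisation over $c$, to reach $\gamma_0$; the iteration does not substitute for it.
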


We note that Theorem \ref{th3.1} follows from Theorem 1.2 in \cite{CJ} and from Corollary 3.8 in \cite{CC}. For completeness, we provide an alternative proof, which is of independent interest.   

We will need the following lower bound for the least common multiple of a set of integers. 

\begin{lem}\label{lem3.2}
Let $r\in \N$. For any positive integers $d_1,...,d_r$ and $2\leq s\leq r$,
$$	
\left(\prod_{1\leq i_1<...<i_s\leq r}[d_{i_1},...,d_{i_s}]\right)^{1/{r \choose s}} \geq \frac{\prod_{i=1}^rd_i^{2/(c+1)}}{\prod_{1\leq i<j\leq r}(d_i,d_j)^{2/(c(c+1))}},
$$
where $c=r-s+1$.	
\end{lem}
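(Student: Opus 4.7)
The plan is to pass to $p$-adic valuations and prove the inequality prime by prime. Fixing a prime $p$ and setting $a_i = v_p(d_i)$, after raising both sides of the stated inequality to a suitable integer power to clear fractional exponents, the claim is equivalent to the additive statement
\begin{equation*}
\frac{1}{\binom{r}{s}}\sum_{|S|=s}\max_{i\in S}a_i \;\geq\; \frac{2}{c+1}\sum_{i=1}^r a_i \;-\; \frac{2}{c(c+1)}\sum_{1\leq i<j\leq r}\min(a_i,a_j)
\end{equation*}
holding for all non-negative integers $a_1,\dots,a_r$. After relabelling I may assume $a_1\leq a_2\leq\cdots\leq a_r$. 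Counting $s$-subsets of $[r]$ by the index of their largest element gives $\sum_{|S|=s}\max_{i\in S}a_i = \sum_{j=1}^r\binom{j-1}{s-1}a_j$, and similarly $\sum_{i<j}\min(a_i,a_j) = \sum_{j=1}^r(r-j)a_j$. Clearing denominators, the inequality becomes $\sum_{j=1}^r\alpha_j a_j \geq 0$, where $\alpha_j = \tfrac{c(c+1)}{2}\binom{j-1}{s-1} - \binom{r}{s}(j-s+1)$.

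Next I would apply Abel summation. Setting $a_0 = 0$ and $\Delta_k = a_k - a_{k-1}\geq 0$, one has $\sum_{j=1}^r \alpha_j a_j = \sum_{k=1}^r \Delta_k R_k$ with $R_k := \sum_{j=k}^r \alpha_j$, so it suffices to show $R_k\geq 0$ for every $k\in\{1,\dots,r\}$. Using the hockey-stick identity $\sum_{j=k}^r\binom{j-1}{s-1} = \binom{r}{s} - \binom{k-1}{s}$ together with $\sum_{j=k}^r(j-s+1) = \tfrac{1}{2}\bigl[c(c+1)-(k-s)(k-s+1)\bigr]$, the two contributions to $R_k$ rearrange into the clean closed form
\begin{equation*}
R_k \;=\; \frac{1}{2}\left[\binom{r}{s}(k-s)(k-s+1) - c(c+1)\binom{k-1}{s}\right].
\end{equation*}

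It remains to verify this quantity is non-negative. For $k\leq s$ this is immediate, since $\binom{k-1}{s} = 0$ and $(k-s)(k-s+1)\geq 0$ (both factors are nonpositive). For $k\geq s+1$, setting $\ell = k-s \in \{1,\dots,c-1\}$ and using the factorizations $\binom{r}{s} = \prod_{j=0}^{s-1}(c+j)/s!$ and $\binom{s+\ell-1}{s} = \prod_{j=0}^{s-1}(\ell+j)/s!$, the desired inequality $\binom{r}{s}\ell(\ell+1) \geq c(c+1)\binom{s+\ell-1}{s}$ reduces, after cancelling the common factor $c(c+1)\ell(\ell+1)/s!$ from both sides, to
\begin{equation*}
\prod_{j=2}^{s-1}(c+j)\;\geq\;\prod_{j=2}^{s-1}(\ell+j),
\end{equation*}
which is clear term by term because $\ell \leq c-1 < c$ (and the two products are empty when $s=2$, giving equality, consistent with the $s=2$ case). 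The main task—and the only place where the specific exponents on the right-hand side of the lemma are really used—is the algebraic identification of the factored form of $R_k$; once that identity is noted, the positivity is elementary.
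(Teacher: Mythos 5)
Your proposal is correct and follows essentially the same route as the paper: reduce to a single prime via $p$-adic valuations, sort the exponents, translate the products of lcm's, $d_i$'s and gcd's into the combinatorial sums $\sum_j\binom{j-1}{s-1}a_j$, $\sum_j a_j$, $\sum_j(r-j)a_j$, and finish with an Abel-summation argument whose core is a binomial inequality. The only cosmetic difference is in the bookkeeping: the paper first discards the non-positive weights on indices $i<s$ (passing to coefficients $\delta_i$ and $\gamma_i$ on $[s,r]$ and comparing their partial sums), while you keep all indices and verify the tail sums $R_k\geq 0$ directly, where the case $k\leq s$ is trivial. Your closed form $R_k=\tfrac12\bigl[\binom{r}{s}(k-s)(k-s+1)-c(c+1)\binom{k-1}{s}\bigr]$ and the reduction (after setting $\ell=k-s$) to $\prod_{j=2}^{s-1}(c+j)\geq\prod_{j=2}^{s-1}(\ell+j)$ is, up to a shift of index, the same inequality the paper reaches, namely $r!(l-s+2)!\geq l!(r-s+2)!$.
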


\begin{rem}\label{rem2.2} 
% \rm 

1) The conclusion of this lemma does not hold for $s=1$. Indeed, in this case $c=r$, and taking $d_1=...=d_{r-1}=1$ and $d_r=d$, we see that $d^{1/r}<d^{2/(r+1)}$ whenever $r\geq2$.
	
2) The bound is sharp for any $r\geq s\geq 2$. One can take $d_1=...=d_{s-1}=1$ and $d_s=...=d_r=d$. Then each $[d_{i_1},...,d_{i_s}]$ is equal to $d$, and so is the left-hand side of the inequality. The right-hand side is $d^a$, where 
$$
a=\frac{2c}{c+1}-\frac{2}{c(c+1)}{c \choose 2}=\frac{2c}{c+1}-\frac{c-1}{c+1}=1,
$$
so both sides of the inequality are equal to $d$.
	
3) The bound is also sharp in situations similar to the following one. Let $n$ be the product of distinct primes $p_1,...,p_5$, $r=10$ and $\{d_i\}_{i=1}^{10}=\{p_{i_1}p_{i_2}p_{i_3}: 1\leq i_1<i_2<i_3\leq 5\}$. Let $s=5$. Since for any $p_j$ there are ${4 \choose 2}=6$ numbers $d_i$ divisible on $p_j$, the left-hand side is equal to $n$. Simple calculations show that so is the right-hand side. 
\end{rem}

\begin{proof}[\textbf{Proof of Lemma \ref{lem3.2}}]
	
Fix arbitrary $r\in\N$ and numbers $d_1,...,d_r$.  First, we write $d_i=\prod_pp^{\a_i(p)}$, and for each prime $p$  rearrange the numbers $\{\a_i(p): i\leq r\}$ in the nondecreasing order: let $\b_i=\b_i(p)$ be such that $\{\b_1,...,\b_r\}=\{\a_1,...,\a_r \}$ and $\b_1\leq ... \leq \b_r$. Fix $2\leq s\leq r$. It is easy to see that 
$$
\prod_{1\leq i_1<...<i_s\leq r}[d_{i_1},...,d_{i_s}]=\prod_pp^{{r-1\choose s-1 }\b_r+{r-2\choose s-1 }\b_{r-1}+...+{s-1\choose s-1 }\b_s}.
$$
Recall that 
$$
{r-1\choose s-1 }+{r-2\choose s-1 }+...+{s-1\choose s-1 }={r\choose s};
$$
in particular, it can be easily seen if we take all $d_i$ equal (say, equal to a prime $p$). We also have
\begin{equation*}\label{0.2}
\prod_{i=1}^rd_i=\prod_pp^{\sum_{i=1}^r\b_i}
\end{equation*}
and
\begin{equation*}\label{0.3}
\prod_{1\leq i<j\leq r}(d_i,d_j) =\prod_pp^{\b_{r-1}+2\b_{r-2}+...+(r-1)\b_1}=\prod_pp^{\sum_{i=1}^{r-1}(r-i)\b_i}.	
\end{equation*}
	
Denote for brevity $\g_i={i-1 \choose s-1}{r\choose s}^{-1}$; we saw that $\sum_{i=s}^r\g_i=1$. Let also $c=r-s+1$. To prove the lemma, we need to show that, for any prime $p$,
\begin{equation}\label{3.1}
L:=\g_r\b_r+...+\g_s\b_s \geq \frac{2}{c+1}\sum_{i=1}^r\b_i - \frac{2}{(c+1)c}\sum_{i=1}^r(r-i)\b_i=:R.
\end{equation}
We have
$$
R=\frac{2}{c+1}\sum_{i=1}^r\left(1-\frac{r-i}{r-s+1}\right)\b_i\leq \frac{2}{c+1}\sum_{i=s}^r\frac{i-s+1}{r-s+1}\b_i
=\d_s\b_s+...+\d_r\b_r.
$$
where $\d_i=\frac{2(i-s+1)}{c(c+1)}$. We claim that to prove (\ref{3.1}) it is enough to show that, for any $s\leq l\leq r$, 
\begin{equation}\label{3.2}
\sum_{i=s}^l\d_i \geq \sum_{i=s}^l\g_i.
\end{equation}
Indeed, once we know it, (\ref{3.1}) follows: since $\b_s\leq \b_{s+1}\leq ...\leq \b_r$ and $\sum_{i=s}^r\d_i=\sum_{i=s}^r\g_i=1$, we get
\begin{multline*}
R-L \leq \sum_{i=s}^r(\d_i-\g_i)\b_i \leq (\d_s+\d_{s+1}-\g_s-\g_{s+1})\b_{s+1}+\sum_{i=s+2}^r(\d_i-\g_i)\b_i\\
\leq \ldots \leq \left(\sum_{i=s}^r\d_i-\sum_{i=s}^r\g_i\right)\b_r=0. 	
\end{multline*}
Now we prove (\ref{3.2}). Fix $l$ with $s\leq l\leq r$, and denote $c_1=l-s+1$. We have 
$$
\sum_{i=s}^l\d_i=\frac{u}{c}\sum_{j=1}^{c_1}j=\frac{c_1(c_1+1)}{c(c+1)}=\frac{(l-s+1)(l-s+2)}{(r-s+1)(r-s+2)}
$$
and
$$\sum_{i=s}^l\g_i=\sum_{i=s}^l{i-1 \choose s-1}{r\choose s}^{-1}={l\choose s}{r \choose s}^{-1}=\frac{l!(r-s)!}{r!(l-s)!}.
$$
After a simple algebra we see that (\ref{3.2}) is equivalent to $r!(l-s+2)!\geq l!(r-s+2)!$, which is true since $r\geq l\geq s$. The claim follows. 
\end{proof}
	
Now we are ready to prove Theorem \ref{th1.4}. We shall show that if a number $n\leq 3Nk$ has $r\geq3$ divisors $2N\leq d_1<\ldots<d_r\leq 2N+2k$, then $k\geq c_0N^{\g_0-c_1/r}$. Since all the $d_i$ are at least $2N$ and all the $(d_i,d_j)$ are at most $2k$, we get from the previous lemma that, for any $1\leq c \leq r-1$,
$$
3Nk\geq m \geq [d_1,...,d_r] \geq  \frac{(2N)^{2r/(c+1)}}{(2k)^{r(r-1)/(c(c+1))}},
$$
or
$$
k^{c^2+r^2+c-r} \geq 3^{-O(r^2)}N^{2rc-c^2-c}.
$$
Now we choose $c=\lfloor\g_0r\rfloor$, so that $1\leq c\leq r-1$ (since $r\geq3$) and $c^2=\g_0^2r^2+O(r)$. Since $\g_0=(\sqrt5-1)/2$ is the $\arg\max\limits_{0<\a<1} \frac{2\a-\a^2}{1+\a^2}$ and the corresponding maximum is also equal to $\g_0$, we get 
$$
k\gg N^{\g_0-c_1/r}
$$
for some $c_1>0$, as desired. This completes the proof of Theorem \ref{th1.4}.

\section{Appendix: a Sidon subset of squares}

For completeness, here we provide the proof of the statement mentioned in the introduction.

\begin{prop}\label{prop5.1}
For any integer $N\geq1$, the set $\{n^2: N\leq n\leq N+2\sqrt2N^{1/2}\}$ is a Sidon set. 	
\end{prop}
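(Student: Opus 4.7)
My plan is a proof by contradiction. Suppose $n_1^2+n_2^2=n_3^2+n_4^2$ with all four $n_i$ in $[N,N+k]$, where $k=2\sqrt{2}\,N^{1/2}$, yet $\{n_1,n_2\}\neq\{n_3,n_4\}$. After the usual symmetrizations --- reorder each pair so that $n_1\geq n_2$ and $n_3\geq n_4$, then swap the pairs so that $n_1\geq n_3$ --- non-triviality forces the strict chain $n_1>n_3\geq n_4>n_2$. Set $e=n_2$ and $p=n_1-e$, $q=n_3-e$, $r=n_4-e$; then $p>q\geq r>0$, each of $p,q,r$ is at most $k$, and $e\geq N$.

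Substituting into the hypothesis gives $2e(p-q-r)=q^2+r^2-p^2$. A sign check rules out $p\geq q+r$ (the left side would be non-negative while the right is strictly negative), so $m:=q+r-p\geq 1$. Writing $p=q+r-m$ in $2em=p^2-q^2-r^2$ and simplifying produces the key factorisation
$$(q-m)(r-m)=m\!\left(e+\tfrac{m}{2}\right).$$
Since the left side is an integer, so is the right, which forces $m^2/2$ to be an integer and hence $m$ to be even; in particular $m\geq 2$.

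The right side is strictly positive, so $q-m$ and $r-m$ have the same sign. In the non-negative case $q,r\geq m$ one has $p=q+r-m\geq m$, and AM-GM gives $(q-m)(r-m)\leq\bigl((p-m)/2\bigr)^2\leq\bigl((k-m)/2\bigr)^2$. Together with $m(e+m/2)\geq mN+m^2/2$ and $k^2=8N$, this rearranges to $4N(m-2)\leq-m(m+2k)$, impossible for $m\geq 2$. In the negative case $q,r<m$, put $q'=m-q$ and $r'=m-r$; then $q',r'>0$, $q'+r'=m-p\leq m$, so AM-GM gives $q'r'\leq m^2/4$, while the identity supplies $q'r'\geq mN+m^2/2$, yielding $4mN+m^2\leq 0$ --- again impossible.

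The whole argument pivots on the identity $(q-m)(r-m)=m(e+m/2)$: it simultaneously encodes the parity constraint that kills odd $m$ and produces a form susceptible to AM-GM. The main obstacle I expect is spotting and deriving this clean factorisation; once it is in hand, the bound $k^2=8N$ is precisely the threshold at which the AM-GM inequality becomes contradictory for every admissible (even, $\geq 2$) value of $m$.
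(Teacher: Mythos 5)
Your proof is correct, and it takes a genuinely different route from the paper's. Both arguments pivot on a parity observation (a certain difference must be even, hence at least $2$ in absolute value if nonzero) and both exploit the exact threshold $k^2 = 8N$, but the mechanisms are different. The paper writes $n_i = N + s_i$ and shows directly that $l := s_1 + s_2 - s_3 - s_4$ is even (using $s^2 \equiv s \pmod 2$); if $l \neq 0$ then $|2Nl| \geq 4N$, while a short magnitude estimate bounds $|s_3^2 + s_4^2 - s_1^2 - s_2^2|$ strictly below $4N$, a contradiction. So $l = 0$, and then comparing first and second elementary symmetric functions gives $\{s_1,s_2\} = \{s_3,s_4\}$. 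You instead normalize by the smallest element $e = n_2$, set $m = q + r - p$ (which equals $-l$ in the paper's notation), derive the clean factorization $(q-m)(r-m) = m\left(e + \tfrac{m}{2}\right)$, force $m$ even (hence $m \geq 2$) by integrality of $m^2/2$, and finish with AM--GM. The paper's proof is shorter; your factorization identity packages the constraint more explicitly and makes the role of $k^2 = 8N$ transparent. Two minor remarks: (i) your ``non-negative'' Case 1 is really the strictly positive case $q,r > m$, since the right side $m(e+m/2)$ is strictly positive; and (ii) Case 2 is in fact vacuous --- from $p \geq q+1$ (integers) and $m = q + r - p = r - (p-q)$ one gets $m \leq r - 1 < r$, so $r - m \geq 1$ always --- though your handling of it is harmless.
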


\begin{proof} Suppose that for some $0\leq s_i\leq 2\sqrt2N^{1/2}$ we have	
$$
(N+s_1)^2+(N+s_2)^2= (N+s_3)^2+(N+s_4)^2;
$$
we need to show that $\{s_1,s_2\}=\{s_3,s_4\}$. Let us rewrite the equation as
\begin{equation}\label{5.1}
2N(s_1+s_2-s_3-s_4)=s_3^2+s_4^2-s_1^2-s_2^2, \quad 0\leq s_i\leq 2\sqrt2N^{1/2}.  
\end{equation}
We first show that this equality is impossible if $s_1+s_2\neq s_3+s_4$. Let $l=s_1+s_2-s_3-s_4\neq0$; without loss of generality we may suppose that $l>0$. Since the left-hand side is even, so is $s_3^2+s_4^2-s_1^2-s_2^2$; but $s_i^2\equiv s_i\pmod{2}$, and hence $l$ is also even. Thus $l\geq2$ and the left-hand side has absolute value at least $4N$. Now we want to get an upper bound for the right-hand side. We have
$$
s_1^2+s_2^2\geq0.5(s_1+s_2)^2=0.5(s_3+s_4+l)^2
$$
and therefore (by the upper bound for $s_i$)
$$
s_3^2+s_4^2-s_1^2-s_2^2 \leq 0.5(s_3-s_4)^2-(s_3+s_4)l-0.5l^2<0.5\max\{s_3^2,s_4^2\}\leq 4N.
$$
Thus the right-hand side is less than $4N$ and we get a contradiction.

Hence, only the four-tuples $(s_1,s_2,s_3,s_4)$ with $s_1+s_2=s_3+s_4$ matter. Such a four-tuple is a solution of (\ref{5.1}) if and only if $s_1^2+s_2^2=s_3^2+s_4^2$, which is equivalent (under the assumption $s_1+s_2=s_3+s_4$) to $s_1s_2=s_3s_4$. It follows that $\{s_1,s_2\}=\{s_3,s_4\}$, as desired.
\end{proof}	

We note that the term $2\sqrt2N^{1/2}$ in this proposition is essentially the best possible\footnote{We thank Alexander Kalmynin for this observation}, because for some integers $N$, the equation $a^2+b^2=c^2+d^2$ has nontrivial solutions in the interval $[N;N+2\sqrt2N^{1/2}+3]$. Indeed, one can take $N=2m^2-1$, where $m\geq2$ is an integer, and then $2m^2+4m+1=N+2\sqrt2\sqrt{N+1}+2\leq N+2\sqrt2N^{1/2}+3$ since $N\geq3$, and the claim follows from the identity
$$
(2m^2+4m+1)^2+(2m^2-1)^2=2(2m^2+2m+1)^2.
$$

\end{document}